\newtheorem{Theorem}{Theorem}
\newtheorem{Remark}{Remark}
\newcommand\emp\varnothing
\newcommand\eps\varepsilon
\newcommand\ov\widetilde
\def\^#1{^{\overline{#1}}}
\begin{document}

\title{Left-invariant pseudo-Riemannian metrics\\ on four-dimensional Lie groups\\ with zero Schouten-Weyl tensor}

\author{Olesya P. Khromova, Pavel N. Klepikov, Eugene D. Rodionov\thanks{This work was supported by the Russian Foundation for Basic Research (projects \textnumero 16--01--00336a, \textnumero 16--31--00048mol\_a), and by the Ministry of Education and Science of Russian Federation in the framework of the base part of a government order to Altai State University in the area of scientific activity (project no. 1148).}}

\date{}

\maketitle

\begin{abstract}
In the presented paper left-invariant pseudo-Riemannian metrics on four-dimensional Lie groups with zero Schouten-Weyl tensor are investigated. The complete classification of these metric Lie groups is obtained in terms of the structure constants of corresponding Lie algebras.
\end{abstract}

\allowdisplaybreaks

\section{Introduction}

Riemannian manifolds with zero Schouten-Weyl tensor were investigated by~many mathematicians (see for example~\cite{Besse}). In particular, this class contains Einstein manifolds ($ r = \lambda g $) and their direct products, locally symmetric spaces ($ \nabla R = 0 $), Ricci parallel manifolds ($ \nabla r = 0 $), and conformally flat manifolds ($ W = 0 $). 

In general case the classification problem of (pseudo)Riemannian mani\-folds with zero Schouten-Weyl tensor is very difficult. Therefore one can consider some restrictions. So, for example, G. Calvaruso and A. Zaeim have classified pseudo-Riemannian left-invariant Einstein metrics, conformally flat metrics and metrics with parallel Ricci tensor on four-dimensional Lie groups~\cite{2,3,4}. Besides this, O.P. Khromova, E.D. Rodionov and V.V.~Slavskii have classified four-dimensional Lie groups with left-invariant Riemannian metric and zero divergence Weyl tensor~\cite{5,6,7,9}. Also, A.~Zaeim and A.~Haji-Badali have classified Einstein-like pseudo-Riemannian homogeneous 4-manifolds with nontrivial isotropy~\cite{ZHb}.

In this paper, we obtain full classification of left-invariant pseudo\nobreakdash-Riemannian metrics on four-dimensional Lie groups with zero Schouten\nobreakdash-Weyl tensor, which are neither Einstein metrics, nor conformally flat metrics, nor Ricci parallel metrics.

The Schouten-Weyl tensor $SW$ on the (pseudo)Riemannian manifold $(M,g)$ of dimension $n\geqslant3$ is defined by the formula
\begin{equation*}
SW(X,Y,Z)=\nabla_Z A(X,Y)- \nabla_Y A(X,Z),
\end{equation*}
were $A = \frac{1}{n - 2}\left(r - \frac{sg}{2(n - 1)}\right)$ is the one-dimensional curvature tensor, $s$ is the~scalar curvature.
If $n\geqslant4$, then The Schouten-Weyl tensor is connected with the divergence of the Weyl tensor via the next equation~\cite{Besse}:
\begin{equation*}
SW = -(n-3)\,\mathrm{div}\,W.
\end{equation*}
If the scalar curvature of the (pseudo)Riemannian manifold is constant, then the following conditions are equivalent
\begin{equation}\label{eq:r_ijk=r_ikj} 
SW=0 \quad \Leftrightarrow \quad \nabla_Z r(X,Y) = \nabla_Y r(X,Z). 
\end{equation}

Following the standard terminology (see for example \cite{2,13}), we define {\it Segre type} of a given a self-adjoint operator with respect to a nondegenerate inner product, which are listed between brackets $\{$ $\}$, and they denote the~sizes of Jordan cells in the decomposition of the operator. Round brackets group together different blocks, which refer to the one eigenvalue. When different blocks refer to the one eigenvalue, the Segre type is said to be {\it degenerate}. 

A fundametal step for the problem of classification of four-dimensional metric Lie groups with zero Schouten-Weyl tensor is to define which Segre types of the Ricci operator are possible, and to find corresponding Lie algebras. All possible Segre types of the Ricci operator on four-dimensional metric Lie groups are listed in the Table~\ref{table:Segre_types}.

\begin{table}[h] 
\caption{Possible Segre types of the Ricci operator on four-dimensional metric Lie groups.}
\label{table:Segre_types}
	\centering
\begin{tabular}{|c|c|c|c|c|c|}
\hline 
Nondegenerate               & $\{1111\}$     & $\{112\}$   & $\{22\}$   & $\{13\}$   & $\{4\}$ \\
\hline
\multirow{4}{*}{Degenerate} & $\{11(11)\}$   & $\{1(12)\}$ & $\{(22)\}$ & $\{(13)\}$ & --- \\
                            & $\{(11)(11)\}$ & $\{(11)2\}$ &            &            &     \\
                            & $\{1(111)\}$   & $\{(112)\}$ &            &            &     \\
                            & $\{(1111)\}$   &             &            &            &     \\
\hline
\hline
Nondegenerate & $\{111\overline{1}\}$   & $\{21\overline{1}\}$ & $\{2\overline{2}\}$ & \multicolumn{2}{c|}{$\{1\overline{1}1\overline{1}\}$}   \\
\hline
Degenerate    & $\{(11)1\overline{1}\}$ & ---                  & ---                 & \multicolumn{2}{c|}{$\{(1\overline{1}1\overline{1})\}$} \\
\hline
\end{tabular}
\end{table}

\begin{Remark}
Note that Segre types $\{22\}$, $\{4\}$, $\{21\overline{1}\}$, $\{2\overline{2}\}$, $\{1\overline{1}1\overline{1}\}$ (and corresponding degenerate Segre types) are only possible in the case of metric of neutral signature, but not in the Lorentzian case.
\end{Remark}

\section{Classification of the four-dimensional metric Lie groups with zero Schouten-Weyl tensor}

In this part we consider pseudo-Riemannian left-invariant metrics on Lie groups with zero Schouten-Weyl tensor which are neither Einstein metrics, nor Ricci parallel metrics, nor conformally flat metrics, since G. Calvaruso and A. Zaeim have classified earlier pseudo-Riemannian left-invariant Einstein metrics, metrics with parallel Ricci tensor, and conformally flat metrics on four-dimensional Lie groups (see~\cite{2,3,4}).

Let us prove the following Theorem.

\begin{Theorem} \label{theorem:not}
Let $(G,g)$ be a four-dimensional metric Lie group with zero Schouten-Weyl tensor, and $(G,g)$ is neither Einstein, nor conformally flat, nor Ricci parallel. Then the Ricci operator $\rho$ has only the following Segre types:
$$ \{1(12)\}, \quad \{(11)2\}, \quad \{(112)\}, \quad \{(22)\}, \quad \{111\overline{1}\}. $$
\end{Theorem}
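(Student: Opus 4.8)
The plan is to go through every Segre type listed in Table~\ref{table:Segre_types} and, for each one, either show that the condition $SW=0$ together with the assumption that the metric is \emph{not} Einstein, \emph{not} conformally flat, and \emph{not} Ricci parallel forces a contradiction, or else identify it as one of the five admissible types. The unifying idea is that the equation $SW=0$, which by~\eqref{eq:r_ijk=r_ikj} reduces (when the scalar curvature is constant) to the second Codazzi-type symmetry $\nabla_Z r(X,Y)=\nabla_Y r(X,Z)$, is a very rigid algebraic constraint on the covariant derivatives of the Ricci operator $\rho$. I would first establish that for a left-invariant metric the scalar curvature is automatically constant, so that~\eqref{eq:r_ijk=r_ikj} is available throughout; this turns $SW=0$ into a purely algebraic system in the structure constants of the Lie algebra $\mathfrak{g}$ and the eigenvalue/eigenvector data encoded by the Segre type.

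\medskip

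Next I would set up, for each Segre type, a canonical (pseudo-)orthonormal frame adapted to the Jordan form of $\rho$, writing $\rho$ in its normal form with the prescribed block structure and eigenvalues. In such a frame the Levi-Civita connection coefficients are expressed through the structure constants via Koszul's formula, and the Ricci operator is recovered from those coefficients. The key computational step is to write out the components of the tensor $C(X,Y,Z):=\nabla_Z r(X,Y)-\nabla_Y r(X,Z)$ in this frame and impose $C\equiv0$. I expect this to split into two regimes. When $\rho$ is \emph{diagonalizable} with distinct eigenvalues (the nondegenerate types such as $\{1111\}$, $\{112\}$, $\{13\}$, $\{4\}$ interpreted with distinct spectra, and their real/complex analogues), the Codazzi symmetry forces enough eigenspace directions to be parallel that $\rho$ becomes either Ricci-parallel or Einstein, excluding those types. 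When eigenvalues coincide across blocks (the genuinely degenerate types), some of the constraints are automatically satisfied, leaving room for the exceptional non-trivial solutions.

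\medskip

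For each candidate type the argument therefore has two possible outcomes that I would track carefully: (i) the algebraic system derived from $C\equiv0$ is \emph{over-determined} and its only solutions satisfy $\nabla r=0$ or $r=\lambda g$ or $W=0$, in which case the type is discarded because it falls under the already-classified Calvaruso--Zaeim cases; or (ii) the system admits solutions that are genuinely none of these, in which case the type survives and belongs to the list. I would organize the bookkeeping so that the surviving cases are exactly $\{1(12)\}$, $\{(11)2\}$, $\{(112)\}$, $\{(22)\}$, and $\{111\overline{1}\}$, and verify in each surviving case that an explicit Lie algebra realizing a non-Einstein, non-conformally-flat, non-Ricci-parallel metric exists (so the type is not vacuously admissible). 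A uniform observation that should streamline the elimination is that a \emph{simple} real eigenvalue $\lambda$ whose eigenvector $e$ satisfies the Codazzi symmetry tends to force $e$ to be an eigenvector of the whole connection, i.e. $\nabla e\parallel e$; repeated application of this across the distinct-eigenvalue directions collapses the non-degenerate types.

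\medskip

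The main obstacle will be the \emph{complex and nilpotent Jordan blocks}: the types $\{13\}$, $\{4\}$, $\{21\overline1\}$, $\{2\overline2\}$, and $\{1\overline11\overline1\}$, where $\rho$ is not diagonalizable over $\mathbb{R}$ or has complex eigenvalues. There the normal form of $\rho$ carries off-diagonal nilpotent pieces, the adapted frame is pseudo-orthonormal with null directions, and the Codazzi equations couple the nilpotent part to the connection in a way that is delicate to untangle; ruling these out (or extracting the single survivor $\{111\overline1\}$ in the complex family) is where the bulk of the technical labour lies. I would attack these by exploiting the null structure of the adapted frame to simplify Koszul's formula, then showing that the nilpotent off-diagonal entries of $\rho$ propagate through $C\equiv0$ into relations that cannot hold unless the metric degenerates into one of the excluded classes. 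The diagonal degenerate types $\{(11)2\}$, $\{1(12)\}$, $\{(112)\}$, $\{(22)\}$ are comparatively mild and I expect them to survive by explicit construction rather than elimination.
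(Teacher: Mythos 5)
Your proposal follows essentially the same route as the paper: a case-by-case sweep through the Segre types of Table~\ref{table:Segre_types}, placing $r$ and $g$ in the adapted canonical frame, translating the condition \eqref{eq:r_ijk=r_ikj} (available because left-invariance makes the scalar curvature constant) into an algebraic system on the structure constants, and discarding each type whose solutions are forced to be Einstein, conformally flat, or Ricci parallel. The paper carries out exactly this elimination (exhibiting the type $\{(11)(11)\}$ explicitly and treating the rest analogously, with Gr\"obner-basis computations), so your plan is the intended argument.
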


\begin{proof}
We prove this result from case-by-case, starting from the possible Segre types of the Ricci operator $\rho$, which are listed in the Table~\ref{table:Segre_types}.

Let $\rho$ has Segre type $\{(11)(11)\}$. Then there exists a basis $\{e_1,e_2,e_3,e_4\}$ in the metric Lie algebra such that the metric tensor $g$ and the Ricci tensor~$r$ have the following form (see~\cite{ONeil})
\begin{equation*}
r = \begin {pmatrix}
\varepsilon_1\rho_1 & 0                   & 0                   & 0 \\
0                   & \varepsilon_2\rho_1 & 0                   & 0 \\
0                   & 0                   & \varepsilon_3\rho_2 & 0 \\
0                   & 0                   & 0                   & \varepsilon_4\rho_2
\end {pmatrix},\quad 
g = \begin {pmatrix}
\varepsilon_1 & 0             & 0             & 0 \\
0             & \varepsilon_2 & 0             & 0 \\
0             & 0             & \varepsilon_3 & 0 \\
0             & 0             & 0             & \varepsilon_4
\end {pmatrix},
\end{equation*}
where $\rho_1\ne\rho_2$ and $\varepsilon_i=\pm1$.

Next, we consider the equations~(\ref{eq:r_ijk=r_ikj}) as restrictions on the structure constants of the Lie algebra:
\begin{align*}
\left( C_{12}^3\varepsilon_{{3}}-C_{13}^2\varepsilon_{{2}}-C_{23}^1\varepsilon_{{1}} \right)  \left( \rho_1-\rho_2 \right) &=0, & C_{12}^3\varepsilon_{{3}} \left( \rho_1-\rho_2 \right) &=0,\\
\left( C_{12}^3\varepsilon_{{3}}+C_{13}^2\varepsilon_{{2}}+C_{23}^1\varepsilon_{{1}} \right)  \left( \rho_1-\rho_2 \right) &=0, & C_{12}^4\varepsilon_{{4}} \left( \rho_1-\rho_2 \right) &=0,\\
\left( C_{12}^4\varepsilon_{{4}}-C_{14}^2\varepsilon_{{2}}-C_{24}^1\varepsilon_{{1}} \right)  \left( \rho_1-\rho_2 \right) &=0, & C_{13}^1\varepsilon_{{1}} \left( \rho_1-\rho_2 \right) &=0,\\
\left( C_{12}^4\varepsilon_{{4}}+C_{14}^2\varepsilon_{{2}}+C_{24}^1\varepsilon_{{1}} \right)  \left( \rho_1-\rho_2 \right) &=0, & C_{13}^3\varepsilon_{{3}} \left( \rho_1-\rho_2 \right) &=0,\\
\left( C_{13}^4\varepsilon_{{4}}+C_{14}^3\varepsilon_{{3}}-C_{34}^1\varepsilon_{{1}} \right)  \left( \rho_1-\rho_2 \right) &=0, & C_{14}^1\varepsilon_{{1}} \left( \rho_1-\rho_2 \right) &=0,\\
\left( C_{13}^4\varepsilon_{{4}}+C_{14}^3\varepsilon_{{3}}+C_{34}^1\varepsilon_{{1}} \right)  \left( \rho_1-\rho_2 \right) &=0, & C_{14}^4\varepsilon_{{4}} \left( \rho_1-\rho_2 \right) &=0,\\
\left( C_{23}^4\varepsilon_{{4}}+C_{24}^3\varepsilon_{{3}}-C_{34}^2\varepsilon_{{2}} \right)  \left( \rho_1-\rho_2 \right) &=0, & C_{23}^2\varepsilon_{{2}} \left( \rho_1-\rho_2 \right) &=0,\\
\left( C_{23}^4\varepsilon_{{4}}+C_{24}^3\varepsilon_{{3}}+C_{34}^2\varepsilon_{{2}} \right)  \left( \rho_1-\rho_2 \right) &=0, & C_{23}^3\varepsilon_{{3}} \left( \rho_1-\rho_2 \right) &=0,\\
C_{24}^2\varepsilon_{{2}} \left( \rho_1-\rho_2 \right) &=0, & C_{24}^4\varepsilon_{{4}} \left( \rho_1-\rho_2 \right) &=0,\\
C_{34}^1\varepsilon_{{1}} \left( \rho_1-\rho_2 \right) &=0, & C_{34}^2\varepsilon_{{2}} \left( \rho_1-\rho_2 \right) &=0.
\end{align*}

Solving this equations, we obtain that the metric Lie group must be Ricci parallel. We found analogously that Segre types different from $\{1(12)\}$, $\{(11)2\}$, $\{(112)\}$, $\{(22)\}$ and $\{111\overline{1}\}$ are not occur.
\end{proof}

\begin{Theorem} \label{theorem:1}
Let $(G,g)$ be a four-dimensional metric Lie group with zero Schouten-Weyl tensor, and $(G,g)$ is neither Einstein, nor conformally flat, nor Ricci parallel. Then the metric Lie algebra of the group $G$ is one from the Table~\ref{table:2}.
\end{Theorem}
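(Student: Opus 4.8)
The plan is to take Theorem~\ref{theorem:not} as the starting point: it restricts the Ricci operator to the five Segre types $\{1(12)\}$, $\{(11)2\}$, $\{(112)\}$, $\{(22)\}$, $\{111\overline{1}\}$, and I would analyse each of them in turn. For a fixed type I would choose, as in~\cite{ONeil} and in the proof of Theorem~\ref{theorem:not}, a basis $\{e_1,e_2,e_3,e_4\}$ in which the metric $g$ and the Ricci tensor $r$ are put simultaneously into canonical normal form, governed by a small number of eigenvalue parameters and signs $\varepsilon_i=\pm1$. The nondiagonalizable type $\{(22)\}$ and the complex type $\{111\overline{1}\}$ require the corresponding Jordan-block and complex-eigenvalue normal forms, which makes their bookkeeping heavier than the purely diagonal cases. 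The unknowns of the problem are then the structure constants $C_{ij}^k$ together with these eigenvalue parameters.

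With the frame fixed, I would assemble three families of polynomial constraints. First, the Jacobi identity for $\{e_i\}$ supplies the usual quadratic relations among the $C_{ij}^k$. Second, because a left-invariant metric has constant scalar curvature, the vanishing of the Schouten-Weyl tensor is equivalent by~(\ref{eq:r_ijk=r_ikj}) to $\nabla_Z r(X,Y)=\nabla_Y r(X,Z)$; expressing the Levi-Civita connection through the structure constants (exactly as in the $\{(11)(11)\}$ computation carried out in the proof of Theorem~\ref{theorem:not}) converts this into explicit equations in the $C_{ij}^k$ and the eigenvalues. Third, the Ricci operator that one computes directly from the structure constants must coincide with the prescribed canonical matrix; matching entries both fixes the eigenvalue parameters in terms of the $C_{ij}^k$ and imposes the remaining relations.

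The heart of the argument is to solve this combined system for every admissible Segre type. I would eliminate the eigenvalue parameters, reduce to the structure constants, and use the residual freedom of pseudo-orthonormal changes of basis preserving the canonical form of $r$ to normalize the surviving constants to a finite list of representatives. This is the step I expect to be the main obstacle: the systems are large, genuinely polynomial, and are most reliably treated with computer algebra (for instance Gr\"obner-basis elimination), and the chief difficulty is to enumerate all solution branches so that no family is overlooked and none is recorded twice.

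Finally, I would discard every solution whose metric turns out to be Einstein, conformally flat, or Ricci parallel, since these are excluded by hypothesis and already classified in~\cite{2,3,4}, and gather the remaining families. Presenting each surviving metric Lie algebra by its nonzero structure constants, together with the associated metric, produces exactly the entries of Table~\ref{table:2} and thereby completes the classification.
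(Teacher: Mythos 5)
Your proposal follows essentially the same route as the paper: for each of the five admissible Segre types from Theorem~\ref{theorem:not}, fix the canonical forms of $g$ and $r$ from~\cite{ONeil}, impose the Jacobi identity, the equations~(\ref{eq:r_ijk=r_ikj}), and the matching of the computed Ricci tensor with the canonical matrix, then solve the resulting polynomial system by Gr\"obner bases and discard the Einstein, conformally flat and Ricci parallel branches. The only divergence is that you additionally propose normalizing representatives up to isomorphism, which the paper explicitly defers to later work.
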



{\footnotesize
\begin{longtable}{|c|p{210pt}|c|} 
\caption{Four-dimensional metric Lie algebras with zero Schou\-ten-Weyl tensor}
\label{table:2}
\endfirsthead
\endhead
\hline
Segre type  & Lie brakets & Metric tensor \\
\hline
$\{1(12)\}$ & 
\parbox[c]{210pt}{\centering $[e_2,e_3]=3ae_3$, $[e_2,e_4]=-{\frac {\varepsilon_2}{2a}}e_3+ae_4$, $a\ne0$.} & 
$\begin {pmatrix} \varepsilon_1 & 0 & 0 & 0 \\ 0 & \varepsilon_2 & 0 & 0 \\ 0 & 0 & 0 & \varepsilon_3 \\ 0 & 0 & \varepsilon_3 & 0 \end {pmatrix}$
\\
\cline{1-2}

$\{(11)2\}$
&
\parbox[c]{210pt}{\centering 
$[e_1,e_2]=\frac12cb\left(\delta\sqrt{5}-1 \right) e_1 - \frac12ca\left(\delta\sqrt {5}-1\right) e_2$,\\ $[e_1,e_3]=-\frac14ca\left(\delta\sqrt{5}-3\right) e_3$,\\ $[e_1,e_4]=ae_3+\frac14ca\left(\delta\sqrt{5}-3\right) e_4$,\\ $[e_2,e_3]=-\frac14cb\left(\delta\sqrt{5}-3 \right) e_3$,\\ $[e_2,e_4]=be_3+\frac14cb\left(\delta\sqrt{5}-3\right) e_4$,\\ $c=\frac{1}{a^2\varepsilon_1+b^2\varepsilon_2}$, $a^2\varepsilon_1+b^2\varepsilon_2\ne0$.} & \\

\cline{2-2}
 &
\parbox[c]{210pt}{\centering 
$[e_1,e_2]={\frac {2a \left( \varepsilon_1+\sqrt {5}\delta \right) }{\sqrt {5}\delta+3\varepsilon_{{1}}}}e_2$, $[e_1,e_3]=ae_3$, \\ 
$[e_1,e_4]={\frac { \left( \sqrt {5}\delta+3\varepsilon_{{1}} \right) }{4a}}e_{{3}}-ae_{{4}}$, $a\ne0$.} & \\

\cline{1-2}

$\{(112)\}$
&
\parbox[c]{210pt}{\centering 
$[e_1,e_2]=ae_3$, $[e_1,e_4]=be_1+ce_2+de_3$,\\ 
$[e_2,e_4]=fe_1+(\varphi-b)e_2+ge_3$, $[e_3,e_4]=\varphi e_3$, \\
$\varphi=-{\frac { \left( \varepsilon_{{1}}\varepsilon_{{2}}{a}^{2}-\varepsilon_{{1}}\varepsilon_{{2}}{c}^{2}-\varepsilon_{{1}}\varepsilon_{{2}}{f}^{2}-4{b}^{2}-2cf-2\varepsilon_{{3}} \right) }{4b}}\ne0$,\\ 
$a^2+c^2+f^2\ne0$, $b\ne0$.} & \\

\cline{2-2}
&
\parbox[c]{210pt}{\centering 
$[e_1,e_4]= \left( -\varepsilon_{{1}}\varepsilon_{{2}}c+\delta\varepsilon_{{1}}\varepsilon_{{2}}\sqrt {{a}^{2}-2\varepsilon_{{1}}\varepsilon_{{2}}\varepsilon_{{3}}}\right)e_{{2}}+be_{{3}}$,\\ 
$[e_{{1}},e_{{2}}]=ae_{{3}}$, $[e_{{2}},e_{{4}}]=ce_{{1}}+de_{{3}}+fe_{{2}}$,\\
$[e_{{3}},e_{{4}}]=fe_{{3}}$,\\
${a}^{2}-2\varepsilon_{{1}}\varepsilon_{{2}}\varepsilon_{{3}}\geqslant0$, $4c^2(a\varepsilon_1-c\varepsilon_3)^2 \ne 1$, $f\ne0$.} & \\

\cline{2-2}
&
\parbox[c]{210pt}{\centering 
$[e_{{1}},e_{{4}}]=ae_{{1}}+\psi e_{{2}}+be_{{3}}$,\\ $[e_{{2}},e_{{4}}]=ce_{{1}}+de_{{2}}+fe_{{3}}$, $[e_{{3}},e_{{4}}]=ge_{{3}}$, \\
$\psi=-\varepsilon_{{1}}\varepsilon_{{2}}c+\delta\varepsilon_{{1}}\varepsilon_{{2}}\sqrt {2}\sqrt {-\varepsilon_{{1}}\varepsilon_{{2}} \left({a}^{2}+{d}^{2}- \left( a+d \right) g+\varepsilon_{{3}} \right) }$, ${a}^{2}+{d}^{2}- \left( a+d \right) g+\varepsilon_{{3}}\geqslant0$, \\
$-{a}^{2}\varepsilon_{{1}}+a\varepsilon_{{1}}g+\varepsilon_{{2}}{c}^{2}+{d}^{2}\varepsilon_{{1}}-dg\varepsilon_{{1}}-\varepsilon_{{2}}{\psi}^{2}\ne0$,\\
$-2ac\varepsilon_{{1}}+gc\varepsilon_{{1}}-2d\psi\varepsilon_{{2}}+g\psi\varepsilon_{{2}}\ne0$, $g\ne0$.} & \\

\cline{2-2}
&
\parbox[c]{210pt}{\centering 
$[e_{{1}},e_{{2}}]=ae_{{3}}$, $[e_{{1}},e_{{4}}]=be_{{1}}+\psi e_{{2}}+ce_{{3}}$,\\ 
$[e_{{2}},e_{{4}}]=de_{{1}}+fe_{{2}}+ge_{{3}}$, $[e_{{3}},e_{{4}}]= \left( b+f \right) e_{{3}}$, \\
$\psi=-\varepsilon_{{2}}\varepsilon_{{1}}d+\delta\varepsilon_{{2}}\varepsilon_{{1}}\sqrt {4bf\varepsilon_{{1}}\varepsilon_{{2}}-2\varepsilon_{{1}}\varepsilon_{{2}}\varepsilon_{{3}}+{a}^{2}}$,\\
$4bf\varepsilon_{{1}}\varepsilon_{{2}}-2\varepsilon_{{1}}\varepsilon_{{2}}\varepsilon_{{3}}+{a}^{2}\geqslant0$,\\
$-\varepsilon_{{2}}ad\varepsilon_{{1}}\varepsilon_{{3}}-a\psi\varepsilon_{{3}}+{d}^{2}\varepsilon_{{2}}-\varepsilon_{{2}}{\psi}^{2}\ne0$,\\ 
$ba\varepsilon_{{3}}-fa\varepsilon_{{3}}-bd\varepsilon_{{1}}+b\psi\varepsilon_{{2}}+fd\varepsilon_{{1}}-f\psi\varepsilon_{{2}}\ne0$, \\
$b+f\ne0$.} & \\

\cline{2-3}
&
\parbox[c]{210pt}{\centering 
$[e_{{1}},e_{{2}}]=ae_{{1}}+\delta ae_{{2}}+be_{{3}}$,\\
$[e_{{1}},e_{{4}}]=ce_{{1}}+c\delta e_{{2}}+\psi e_{{3}}$,\\
$[e_{{2}},e_{{4}}]=de_{{1}}+d\delta e_{{2}}+fe_{{3}}$, $[e_{{3}},e_{{4}}]=\varphi e_{{3}}$, \\
$\psi=-{\frac {{b}^{3}+ \left( 2af\varepsilon_{{1}}\varepsilon_{{3}}-2c\delta d-{c}^{2}-{d}^{2}+2\varepsilon_{{3}} \right) b+2af \left(c\delta+d \right) }{2a \left( b\delta\varepsilon_{{1}}\varepsilon_{{3}}+d\delta+c \right) }}$,\\
$\varphi={\frac { 2bc\delta\varepsilon_{{1}}\varepsilon_{{3}}+2bd\varepsilon_{{1}}\varepsilon_{{3}}+2c\delta d+{b}^{2}+{c}^{2}+{d}^{2}+2\varepsilon_{{3}}  }{2\left(b\delta\varepsilon_{{1}}\varepsilon_{{3}}+d\delta+c\right)}}\ne0$,\\
$((-4abf+{b}^{2}d+{c}^{2}d-{d}^{3}-2d\varepsilon_{{3}} ) \varepsilon_{{1}}
+(-{b}^{3}+ ( {c}^{2}+{d}^{2} ) b-4adf ) \varepsilon_{{3}}-2b ) \delta-c (4af\varepsilon_{{3}}+{b}^{2}\varepsilon_{{1}}-2bd\varepsilon_{{3}}-{c}^{2}\varepsilon_{{1}}+{d}^{2}\varepsilon_{{1}}-2\varepsilon_{{1}}\varepsilon_{{3}} ) \ne0$.} & 
$\begin {pmatrix} \varepsilon_1 & 0 & 0 & 0 \\ 0 & -\varepsilon_1 & 0 & 0 \\ 0 & 0 & 0 & \varepsilon_3 \\ 0 & 0 & \varepsilon_3 & 0 \end {pmatrix}$
 \\

\cline{2-2}
&
\parbox[c]{210pt}{\centering 
$[e_{{1}},e_{{2}}]=-\delta ae_{{1}}+ae_{{2}}$, $[e_{{1}},e_{{3}}]=ae_{{3}}$,\\
$[e_{{2}},e_{{3}}]=\delta ae_{{3}}$, $[e_{{1}},e_{{4}}]=-b\delta e_{{1}}+be_{{2}}+ce_{{3}}$,\\ 
$[e_{{2}},e_{{4}}]=-d\delta e_{{1}}+de_{{2}}+\psi e_{{3}}$, \\
$[e_{{3}},e_{{4}}]=-a\delta e_{{2}}\varepsilon_{{1}}\varepsilon_{{3}}+ae_{{1}}\varepsilon_{{1}}\varepsilon_{{3}}+fe_{{3}}$,\\ 
$\psi=\frac{1}{4a} (\delta\varepsilon_{{3}}\varepsilon_{{1}}{b}^{2}+\delta\varepsilon_{{3}}\varepsilon_{{1}}{d}^{2}-2\delta\varepsilon_{{3}}\varepsilon_{{1}}df-2bd\varepsilon_{{1}}\varepsilon_{{3}}+
+2\varepsilon_{{3}}\varepsilon_{{1}}bf+4\delta ca+2\delta\varepsilon_{{1}})$,\\
$a\ne0$, $f\ne0$, $d\delta+b\ne0$.} & \\

\hline
$\{111\overline{1}\}$ &
\parbox[c]{210pt}{\centering 
$[e_{{2}},e_{{3}}]=-a\delta\sqrt {3}e_{{3}}+ae_{{4}}$,\\ $[e_{{2}},e_{{4}}]=ae_{{3}}+a\delta\sqrt {3}e_{{4}}$,\\ $[e_{{3}},e_{{4}}]=2a\varepsilon_{{2}}\varepsilon_{{3}}e_{{2}}$, $a\ne0$.} &
$\begin {pmatrix} \varepsilon_1 & 0 & 0 & 0 \\ 0 & \varepsilon_2 & 0 & 0 \\ 0 & 0 & \varepsilon_3 & 0 \\ 0 & 0 & 0 & \varepsilon_3 \end {pmatrix}$ \\
\hline

$\{(22)\}$
&
\parbox[c]{210pt}{\centering 
$[e_2,e_3]= ae_{{1}}$, $[e_1,e_4]= \left( 2 a+3 b \right) e_{{1}}$, \\
$[e_2,e_4]=  \left( 2 a+2 b \right) e_{{2}}$, $[e_3,e_4]= ce_{{1}}+de_{{2}}+be_{{3}}$, \\
$c \ne 0$, $a\ne 0 $, $a+b \ne 0$, $2 {a}^{2}+4 ab=\varepsilon$.} & 
$\begin {pmatrix} 0 & 0 & \varepsilon & 0 \\ 0 & 0 & 0 & \varepsilon \\ \varepsilon & 0 & 0 & 0 \\ 0 & \varepsilon & 0 & 0 \end {pmatrix}$
\\

\cline{2-2}
& 
\parbox[c]{210pt}{\centering 
$[e_1,e_3]= ae_{{1}}+be_{{2}}$, $[e_2,e_3]= ce_{{1}}+de_{{2}}$, \\
$[e_1,e_4]= \left( 2 c-b \right) e_{{1}}+fe_{{2}}$, \\
$[e_2,e_4]=   \left( 2 f-d \right) e_{{1}}+he_{{2}}$, $[e_3,e_4]= xe_{{1}}+ye_{{2}}$, \\
${a}^{2}+{f}^{2}+{c}^{2}+{h}^{2} \ne 0$, \\
${a}^{2}{y}^{2}-2 abxy-2 af{x}^{2}+2 af{y}^{2}-2 ahxy-{b}^{2}{x}^{2}+4 bc{x}^{2}-2 bc{y}^{2}+2 {f}^{2}{y}^{2}-2 fcxy+2 fd{x}^{2}-4 fhxy+2 {c}^{2}{y}^{2}-{d}^{2}{y}^{2}+2 dhxy+{h}^{2}{x}^{2}-{y}^{2}\varepsilon\ne 0 $,\\
$-2 bf+bd+fc=0$, \\
$-2 af+2 ad-2 cb-2 {f}^{2}+4 fd+2 {c}^{2}-2 {d}^{2}=\varepsilon$, \\
$2 abf-2 afc-2 {b}^{2}c-2 b{f}^{2}+2 b{c}^{2}+2 fcd-b\varepsilon=0$,\\
$2 af-2 {f}^{2}-2 {c}^{2}+2 hc=\varepsilon$, \\
$af+{b}^{2}-2 cb+bh-fd=0$.\\ }
& \\

\cline{2-2}
& 
\parbox[c]{210pt}{\centering                    
$[e_1,e_3]= -{\frac { \left( 3 {c}^{2}+2 {d}^{2} \right) }{d}}e_{{1}}-ce_{{2}}$,\\
$[e_2,e_3]= {\frac {c \left( {c}^{2}-{d}^{2} \right) }{{d}^{2}}}e_{{1}}-{\frac { {c}^{2}+3 {d}^{2}  }{d}}e_{{2}}$, \\
$[e_1,e_4]= 2 {\frac {c \left( {c}^{2}+{d}^{2} \right) }{{d}^{2}}}e_{{1}}$, \\
$[e_2,e_4]= {\frac {{c}^{2}}{d}}e_{{1}}+{\frac {c \left( 2 {c}^{2}+3 {d}^{2} \right)}{{d}^{2}}}e_{{2}}$, \\
$[e_3,e_4]= ae_{{1}}+be_{{2}}+ce_{{3}}+de_{{4}}$, \\
$d \ne 0$, $c \ne 0 $, $ac+bd \ne 0$, \\
$2 {c}^{6}+6 {c}^{4}{d}^{2}+4 {c}^{2}{d}^{4}-{d}^{4}=0$, $\varepsilon = 1$.}
& \\

\cline{2-2}
& 
\parbox[c]{210pt}{\centering 
$[e_1,e_3]= \left( 2 a-2 b \right) e_{{1}}$, $[e_2,e_3]= \left( 2 a-3 b \right) e_{{2}}$, \\
$[e_1,e_4]= ae_{{2}}$, $[e_3,e_4]= ce_{{1}}+de_{{2}}+be_{{4}}$, \\
$a \ne 0$, $d \ne 0 $, $b \ne 0 $, $a\ne b$, $2 {a}^{2}-4 ab=\varepsilon$. }
& \\

\cline{2-2}
& 
\parbox[c]{210pt}{\centering 
$[e_1,e_3]= -{\frac {d \left( 3 {c}^{2}+2 {d}^{2} \right) }{{c}^{2}}} e_{{1}}-{\frac{{d}^{2}}{c}}e_{{2}}$, \\
$[e_2,e_3]= -2 {\frac {d \left( {c}^{2}+{d}^{2} \right) }{{c}^{2}}} e_{{2}}$, \\
$[e_1,e_4]= {\frac { \left( 3 {c}^{2}+{d}^{2} \right) }{c}}e_{{1}}+{\frac {d \left({c}^{2}-{d}^{2} \right) }{{c}^{2}}}e_{{2}}$, \\
$[e_2,e_4]= de_{{1}}+{\frac { \left( 2 {c}^{2}+3 {d}^{2} \right) }{c}}e_{{2}}$, \\
$[e_3,e_4]= ae_{{1}}+be_{{2}}+ce_{{3}}+de_{{4}}$, \\
$c \ne 0$, $d \ne 0 $, $ac+bd \ne 0$, ${c}^{2}-{d}^{2} \ne 0$, \\
$4 {c}^{4}{d}^{2}+6 {c}^{2}{d}^{4}+2 {d}^{6}-{c}^{4}=0$, $\varepsilon = 1$. }
& \\

\cline{2-2}
& 
\parbox[c]{210pt}{\centering $
[e_1,e_3]= {\frac { \left( 12 {a}^{6}+59 {a}^{4}+97 {a}^{2}+42 \right) \delta_2  }{ \left( 4 {a}^{4}+17 {a}^{2}+21 \right)\sqrt {4 {a}^{4}+10 {a}^{2}+6}}}e_{{1}}+{\frac {\delta_1  \delta_2  a}{\sqrt {4 {a}^{4}+10 {a}^{2}+6}}}e_{{2}}$, \\
$[e_2,e_3]= -2 {\frac { \left( 4 {a}^{6}+21 {a}^{4}+38 {a}^{2}+21 \right) \delta_1  \delta_2  a}{ \left( 4 {a}^{4}+17 {a}^{2}+21\right) \sqrt {4 {a}^{4}+10 {a}^{2}+6}}}e_{{1}}$, \\
$[e_1,e_4]= -{\frac { \left( 4 {a}^{6}+13 {a}^{4}+4 {a}^{2}-21 \right) \delta_1  \delta_2  a}{ \left( 4 {a}^{4}+17 {a}^{2}+21\right) \sqrt {4 {a}^{4}+10 {a}^{2}+6}}}e_{{1}}+{\frac { \left( {a}^{2}+3 \right) \delta_2  }{\sqrt {4 {a}^{4}+10 {a}^{2}+6}}}e_{{2}}$, \\
$[e_2,e_4]= -{\frac {\delta_2  {a}^{2}}{\sqrt {4 {a}^{4}+10 {a}^{2}+6}}}e_{{1}}-{\frac {\delta_1  \delta_2  a \left( 8 {a}^{6}+46 {a}^{4}+93 {a}^{2}+63 \right)}{ \left( 4 {a}^{4}+17 {a}^{2}+21 \right) \sqrt {4 {a}^{4}+10 {a}^{2}+6}}} e_{{2}}$, \\
$[e_3,e_4]= be_{{1}}+ce_{{2}}+{\frac {\delta_1  \delta_2   \left( {a}^{2}+2 \right) a}{\sqrt {4 {a}^{4}+10 {a}^{2}+6}}}e_{{3}}+{\frac {\delta_2   \left( {a}^{2}+2 \right) }{\sqrt {4 {a}^{4}+10 {a}^{2}+6}}}e_{{4}}$, \\
$ab+\delta_1  c \ne 0$, $a \ne 0 $, $\varepsilon = 1$. }
& \\

\cline{2-2}
& 
\parbox[c]{210pt}{\centering 
$[e_1,e_3]= {\frac {13 \delta_2 \sqrt {55}}{66}}e_{{1}}+{\frac {\delta_1 \sqrt {21}\delta_2 \sqrt {55}}{330}}e_{{2}}$, \\
$[e_2,e_3]= -\frac{2}{15} \delta_1 \sqrt {21}\delta_2 \sqrt {55}e_{{1}}$, \\
$[e_1,e_4]= -{\frac {2 \delta_1 \sqrt {21}\delta_2 \sqrt {55}}{33}}e_{{1}}+{\frac {4 \delta_2 \sqrt {55}}{55}}e_{{2}}$, \\
$[e_2,e_4]= -{\frac {7 \delta_2 \sqrt {55}}{110}}e_{{1}}-{\frac {3 \delta_1 \sqrt {21}\delta_2 \sqrt {55}}{22}}e_{{2}}$, \\
$[e_3,e_4]= ae_{{1}}+be_{{2}}+{\frac {23 \delta_1 \sqrt {21}\delta_2 \sqrt {55}}{330}}e_{{3}}+{\frac {23 \delta_2 \sqrt {55}}{330}}e_{{4}}$, \\
$\sqrt {21}a+\delta_1 b \ne 0$, $\varepsilon = -1$. }
& \\

\cline{2-2}
& 
\parbox[c]{210pt}{\centering 
$[e_1,e_3]= -{\frac {a \left( 3 {b}^{2}+2 {a}^{2} \right) }{{b}^{2}+2 {a}^{2}}}e_{{1}}-{\frac{b{a}^{2}}{{b}^{2}+2 {a}^{2}}}e_{{2}}$, \\
$[e_2,e_3]= -2 {\frac {b{a}^{2}}{{b}^{2}+2 {a}^{2}}}e_{{1}}-2 ae_{{2}}$, \\
$[e_1,e_4]= 3 {\frac {b \left( {b}^{2}+{a}^{2} \right) }{{b}^{2}+2 {a}^{2}}}e_{{1}}+{\frac {a \left( {b}^{2}+{a}^{2} \right) }{{b}^{2}+2 {a}^{2}}}e_{{2}}$, \\
$[e_2,e_4]= {\frac {a{b}^{2}}{{b}^{2}+2 {a}^{2}}}e_{{1}}+{\frac {b \left( 2 {b}^{2}+3 {a}^{2} \right) }{{b}^{2}+2 {a}^{2}}}e_{{2}}$, \\
$[e_3,e_4]= ce_{{1}}+de_{{2}}+be_{{3}}+ae_{{4}}$, \\
${a} \ne 0$, $cb+da \ne 0$, $\varepsilon = -1$,\\ 
$4 {b}^{4}{a}^{2}+10 {b}^{2}{a}^{4}+6 {a}^{6}-{b}^{4}-4 {b}^{2}{a}^{2}-4 {a}^{4}=0$. }
& \\

\cline{2-2}
& 
\parbox[c]{210pt}{\centering 
$[e_1,e_3]= -{\frac { \left( 3 {a}^{2}+2 {b}^{2} \right) b}{{a}^{2}-2 {b}^{2}}}e_{{1}}-{\frac{a{b}^{2}}{{a}^{2}-2 {b}^{2}}}e_{{2}}$, \\
$[e_2,e_3]= 2 {\frac {a{b}^{2}}{{a}^{2}-2 {b}^{2}}}e_{{1}}-2 {\frac {{a}^{2}b}{{a}^{2}-2 {b}^{2}}}e_{{2}}$, \\
$[e_1,e_4]= {\frac {a \left( 3 {a}^{2}-{b}^{2} \right) }{{a}^{2}-2 {b}^{2}}}e_{{1}}+{\frac {\left( {a}^{2}-3 {b}^{2} \right) b}{{a}^{2}-2 {b}^{2}}}e_{{2}}$, \\
$[e_2,e_4]= {\frac {{a}^{2}b}{{a}^{2}-2 {b}^{2}}}e_{{1}}+{\frac { \left( 2 {a}^{2}+3 {b}^{2} \right) a}{{a}^{2}-2 {b}^{2}}}e_{{2}}$, \\
$[e_3,e_4]= ce_{{1}}+de_{{2}}+ae_{{3}}+be_{{4}}$, \\
$b \ne 0$, $ca+db \ne 0$, ${a}^{2}-2 {b}^{2} \ne0$, \\
$12 {a}^{4}{b}^{2}+6 {a}^{2}{b}^{4}-6 {b}^{6}-{a}^{4}\varepsilon+4 {a}^{2}{b}^{2}\epsilon_{{1}}-4 {b}^{4}\varepsilon=0$. }
& \\

\cline{2-2}
& 
\parbox[c]{210pt}{\centering 
$[e_1,e_3]= {\frac {8 \delta_{{2}}\sqrt {42}}{21}}e_{{1}}+\frac{1}{21} \delta_{{1}}\sqrt {5}\delta_{{2}}\sqrt {21}e_{{2}}$, \\
$[e_2,e_3]= -\frac{2}{21} \delta_{{1}}\sqrt {5}\delta_{{2}}\sqrt {21}e_{{1}}+\frac{2}{21} \delta_{{2}}\sqrt {42}e_{{2}}$, \\
$[e_1,e_4]= -{\frac {\delta_{{1}}\sqrt {5}\delta_{{2}}\sqrt {21}}{105}}e_{{1}}+{\frac {13 \delta_{{2}}\sqrt {42}}{42}}e_{{2}}$, \\
$[e_2,e_4]= -\frac{1}{21} \delta_{{2}}\sqrt {42}e_{{1}}-{\frac {19 \delta_{{1}}\sqrt {5}\delta_{{2}}\sqrt {21}}{105}}e_{{2}}$, \\
$[e_3,e_4]= ae_{{1}}+be_{{2}}+{\frac {8 \delta_{{1}}\sqrt {5}\delta_{{2}}\sqrt {21}}{105}}e_{{3}}+{\frac {4 \delta_{{2}}\sqrt{42}}{21}}e_{{4}}$, \\
$2a+\sqrt {10}b\delta_{{1}} \ne 0$, $\varepsilon=-1$. }
& \\

\cline{2-2}
& 
\parbox[c]{210pt}{\centering 
$[e_1,e_3]= {\frac {\delta_{{2}}\sqrt {21}\sqrt {2} \left( 21+4 \sqrt {21} \right) }{126}}e_{{1}}+{\frac {\delta_{{1}}\sqrt {-10+10 \sqrt {21}}\delta_{{2}}\sqrt {21}\sqrt {2} \left( 21+\sqrt {21} \right) }{2520}}e_{{2}}$, \\
$[e_2,e_3]= -{\frac {\delta_{{1}}\sqrt {-10+10 \sqrt {21}}\delta_{{2}}\sqrt {21}\sqrt {2} \left( 21+\sqrt {21} \right) }{1260}}e_{{1}}+1/3 \delta_{{2}}\sqrt {2}e_{{2}}$, \\
$[e_1,e_4]= \left( {\frac {\delta_{{1}}\sqrt {-10+10 \sqrt {21}}\delta_{{2}}\sqrt {42}}{120}}-\frac{\delta_{{1}}\sqrt {-20+20 \sqrt {21}}\delta_{{2}}}{24}  \right) e_{{1}}+{\frac {\delta_{{2}}\sqrt {21}\sqrt {2} \left( 63+\sqrt {21} \right) }{252}}e_{{2}}$, \\
$[e_2,e_4]= -1/6 \delta_{{2}}\sqrt {2}e_{{1}}-{\frac {\delta_{{1}}\sqrt {-10+10 \sqrt {21}}\delta_{{2}}\sqrt {21}\sqrt {2} \left( 9+\sqrt {21}\right) }{360}}e_{{2}}$, \\
$[e_3,e_4]= ae_{{1}}+be_{{2}}+{\frac {\delta_{{1}}\sqrt {-10+10 \sqrt {21}}\delta_{{2}}\sqrt {21}\sqrt {2}}{60}}e_{{3}}+1/6 \delta_{{2}}\sqrt {42}e_{{4}}$, \\
$b \left( 1+\sqrt {21} \right) \sqrt {-10+10 \sqrt {21}}+20 a\delta_{{1}} \ne 0$,\\
$\varepsilon=-1$. }
& \\

\cline{2-2}
& 
\parbox[c]{210pt}{\centering 
$[e_1,e_3]= -{\frac {a \left( 3 b{c}^{2}+2 b{a}^{2}+3 {c}^{3}+2 c{a}^{2} \right) }{{c}^{3}}}e_{{1}}-{\frac {{a}^{2} \left( c+b \right) }{{c}^{2}}}e_{{2}}$, \\
$[e_2,e_3]= be_{{1}}-{\frac { \left( b{c}^{2}+2 b{a}^{2}+2 {c}^{3}+2 c{a}^{2} \right) a}{{c}^{3}}}e_{{2}}$, \\
$[e_1,e_4]= {\frac {  2 b{c}^{2}+b{a}^{2}+3 {c}^{3}+c{a}^{2} }{{c}^{2}}}e_{{1}}-{\frac {a \left( b{a}^{2}-{c}^{3}+c{a}^{2} \right) }{{c}^{3}}}e_{{2}}$, \\
$[e_2,e_4]= {\frac {a \left( c+b \right) }{c}}e_{{1}}+{\frac {  2 b{c}^{2}+3 b{a}^{2}+2 {c}^{3}+3 c{a}^{2} }{{c}^{2}}}e_{{2}}$, \\
$[e_3,e_4]= de_{{1}}+fe_{{2}}+ce_{{3}}+ae_{{4}}$, \\
$c \ne 0$, $c+b \ne0$, $dc+fa \ne 0$, $b{c}^{2}+b{a}^{2}+c{a}^{2} \ne0$\\
$2 {b}^{2}{c}^{6}+6 {b}^{2}{c}^{4}{a}^{2}+6 {b}^{2}{c}^{2}{a}^{4}+2 {b}^{2}{a}^{6}+4 b{c}^{7}+12 b{c}^{5}{a}^{2}+12 b{c}^{3}{a}^{4}+4 bc{a}^{6}+4 {c}^{6}{a}^{2}+6 {c}^{4}{a}^{4}+2 {c}^{2}{a}^{6}-{c}^{6}\varepsilon=0$. \\ }
& \\

\cline{2-2}
& 
\parbox[c]{210pt}{\centering 
$[e_1,e_3]= {\frac {a \left( 12 {b}^{4}+11 {b}^{2}{a}^{2}+2 {a}^{4} \right) }{{b}^{4}+8 {b}^{2}{a}^{2}+{a}^{4}}}e_{{1}}+{\frac { \left( 4 {b}^{2}+{a}^{2} \right) b{a}^{2}}{{b}^{4}+8 {b}^{2}{a}^{2}+{a}^{4}}}e_{{2}}$, \\
$[e_2,e_3]= -{\frac {b \left( 5 {b}^{4}+9 {b}^{2}{a}^{2}+{a}^{4} \right) }{{b}^{4}+8 {b}^{2}{a}^{2}+{a}^{4}}}e_{{1}}+{\frac {a \left( 3 {b}^{4}+{b}^{2}{a}^{2}+{a}^{4} \right) }{{b}^{4}+8 {b}^{2}{a}^{2}+{a}^{4}}}e_{{2}}$, \\
$[e_1,e_4]= -{\frac {{b}^{3} \left( 7 {b}^{2}-2 {a}^{2} \right) }{{b}^{4}+8 {b}^{2}{a}^{2}+{a}^{4}}}e_{{1}}+{\frac {a \left( {b}^{4}+12 {b}^{2}{a}^{2}+2 {a}^{4} \right) }{{b}^{4}+8 {b}^{2}{a}^{2}+{a}^{4}}}e_{{2}}$, \\
$[e_2,e_4]= -{\frac {a{b}^{2} \left( 4 {b}^{2}+{a}^{2} \right) }{{b}^{4}+8 {b}^{2}{a}^{2}+{a}^{4}}}e_{{1}}-{\frac {b \left( 8 {b}^{4}+14 {b}^{2}{a}^{2}+3 {a}^{4}\right) }{{b}^{4}+8 {b}^{2}{a}^{2}+{a}^{4}}}e_{{2}}$, \\
$[e_3,e_4]= ce_{{1}}+de_{{2}}+be_{{3}}+ae_{{4}}$, \\
${b}^{2}+{a}^{2} \ne 0$, $cb+da \ne 0$, \\
$30 {b}^{10}+78 {b}^{8}{a}^{2}-18 {b}^{6}{a}^{4}-78 {b}^{4}{a}^{6}-12 {b}^{2}{a}^{8}-{b}^{8}\varepsilon -16 {b}^{6}{a}^{2}\varepsilon -66 {b}^{4}{a}^{4}\varepsilon -16 {b}^{2}{a}^{6}\varepsilon -{a}^{8}\varepsilon =0$. }
& \\

\cline{2-2}
& 
\parbox[c]{210pt}{\centering 
$[e_1,e_3]= 3/2 {\frac { \left( 3 {a}^{2}+2 {b}^{2} \right) b}{{a}^{2}+2 {b}^{2}}}e_{{1}}+3/2 {\frac {a{b}^{2}}{{a}^{2}+2 {b}^{2}}}e_{{2}}$, \\
$[e_2,e_3]= -1/2 {\frac {a \left( 5 {a}^{2}+4 {b}^{2} \right) }{{a}^{2}+2 {b}^{2}}}e_{{1}}+1/2 be_{{2}}$, \\
$[e_1,e_4]= -1/2 {\frac {a \left( 4 {a}^{2}-{b}^{2} \right) }{{a}^{2}+2 {b}^{2}}}e_{{1}}+1/2 {\frac { \left( 2 {a}^{2}+7 {b}^{2} \right) b}{{a}^{2}+2 {b}^{2}}}e_{{2}}$, \\
$[e_2,e_4]= -3/2 {\frac {{a}^{2}b}{{a}^{2}+2 {b}^{2}}}e_{{1}}-3/2 {\frac { \left( 2 {a}^{2}+3 {b}^{2} \right) a}{{a}^{2}+2 {b}^{2}}}e_{{2}}$, \\
$[e_3,e_4]= ce_{{1}}+de_{{2}}+ae_{{3}}+be_{{4}}$, \\
${a}^{2}+{b}^{2} \ne 0$, $ca+db \ne 0$, \\
$5 {a}^{6}+7 {a}^{4}{b}^{2}-5 {a}^{2}{b}^{4}-7 {b}^{6}-2 {a}^{4}\varepsilon-8 {a}^{2}{b}^{2}\varepsilon-8 {b}^{4}\varepsilon=0$. }
& \\

\cline{2-3}
& 
\parbox[c]{210pt}{\centering 
$[e_2,e_3]= ae_{{1}}$, $[e_1,e_4]=  {\frac {  2 {a}^{2}-3 \varepsilon  }{4a}}e_{{1}}$, \\
$[e_2,e_4]=  {\frac {  2 {a}^{2}-\varepsilon  }{2a}}e_{{2}}$, $[e_3,e_4]= be_{{1}}+ce_{{2}}- {\frac {  2 {a}^{2}+\varepsilon  }{4a}}e_{{3}}$, \\
$a \ne 0$, $b \ne 0$, $2 {a}^{2}\ne\varepsilon$. }
& 
$\begin {pmatrix} 0 & 0 & \varepsilon & 0 \\ 0 & 0 & 0 & -\varepsilon \\ \varepsilon & 0 & 0 & 0 \\ 0 & -\varepsilon & 0 & 0 \end {pmatrix}$
\\

\cline{2-2}
& 
\parbox[c]{210pt}{\centering 
$[e_1,e_3]=  {\frac {  2 {a}^{2}+\varepsilon  }{2a}}e_{{1}}$, $[e_2,e_3]=  {\frac {  2 {a}^{2}+\varepsilon  }{2a}}e_{{2}}$, \\
$[e_1,e_4]= ae_{{2}}$, $[e_2,e_4]= - {\frac {  2 {a}^{2}-\varepsilon  }{2a}}e_{{1}}+be_{{2}}$, \\
$[e_3,e_4]= ce_{{1}}+de_{{2}}$, \\
$a \ne 0$, $2 ad+bc \ne 0$, \\
$16 {a}^{4}+4 {a}^{2}{b}^{2}+4 {a}^{2}\varepsilon+1\ne0$. }
& \\

\cline{2-2}
& 
\parbox[c]{210pt}{\centering 
$[e_1,e_3]= {\frac {a \left( 12 {b}^{4}-11 {b}^{2}{a}^{2}+2 {a}^{4} \right) }{{b}^{4}-8 {b}^{2}{a}^{2}+{a}^{4}}}e_{{1}}+{\frac { \left( 4 {b}^{2}-{a}^{2} \right) b{a}^{2}}{{b}^{4}-8 {b}^{2}{a}^{2}+{a}^{4}}}e_{{2}}$, \\
$[e_2,e_3]= -{\frac {b \left( 5 {b}^{4}-9 {b}^{2}{a}^{2}+{a}^{4} \right) }{{b}^{4}-8 {b}^{2}{a}^{2}+{a}^{4}}}e_{{1}}+{\frac {a \left( 3 {b}^{4}-{b}^{2}{a}^{2}+{a}^{4} \right) }{{b}^{4}-8 {b}^{2}{a}^{2}+{a}^{4}}}e_{{2}}$, \\
$[e_1,e_4]= -{\frac {{b}^{3} \left( 7 {b}^{2}+2 {a}^{2} \right) }{{b}^{4}-8 {b}^{2}{a}^{2}+{a}^{4}}}e_{{1}}+{\frac {a \left( {b}^{4}-12 {b}^{2}{a}^{2}+2 {a}^{4} \right) }{{b}^{4}-8 {b}^{2}{a}^{2}+{a}^{4}}}e_{{2}}$, \\
$[e_2,e_4]= {\frac {a{b}^{2} \left( 4 {b}^{2}-{a}^{2} \right) }{{b}^{4}-8 {b}^{2}{a}^{2}+{a}^{4}}}e_{{1}}-{\frac {b \left( 8 {b}^{4}-14 {b}^{2}{a}^{2}+3 {a}^{4}\right) }{{b}^{4}-8 {b}^{2}{a}^{2}+{a}^{4}}}e_{{2}}$, \\
$[e_3,e_4]= ce_{{1}}+de_{{2}}+be_{{3}}+ae_{{4}}$, \\
${b}^{2}+{a}^{2} \ne 0$, $2 b\pm a \ne 0$, $cb-da\ne0$, \\
${b}^{4}-8 {b}^{2}{a}^{2}+{a}^{4}\ne0$, $5 {b}^{4}-13 {b}^{2}{a}^{2}+2 {a}^{4}\ne0$, \\
$30 {b}^{10}-78 {b}^{8}{a}^{2}-18 {b}^{6}{a}^{4}+78 {b}^{4}{a}^{6}-12 {b}^{2}{a}^{8}+{b}^{8}\varepsilon-16 {b}^{6}{a}^{2}\varepsilon+66 {b}^{4}{a}^{4}\varepsilon-16 {b}^{2}{a}^{6}\varepsilon+{a}^{8}\varepsilon=0$. }
& \\

\cline{2-2}
& 
\parbox[c]{210pt}{\centering 
$[e_1,e_3]= -{\frac {a \left( 3 b{c}^{2}-2 b{a}^{2}+3 {c}^{3}-2 c{a}^{2} \right) }{{c}^{3}}}e_{{1}}-{\frac {{a}^{2} \left( b+c \right) }{{c}^{2}}}e_{{2}}$, \\
$[e_2,e_3]= be_{{1}}-{\frac { \left( b{c}^{2}-2 b{a}^{2}+2 {c}^{3}-2 c{a}^{2} \right) a}{{c}^{3}}}e_{{2}}$, \\
$[e_1,e_4]= {\frac {  2 b{c}^{2}-b{a}^{2}+3 {c}^{3}-c{a}^{2} }{{c}^{2}}}e_{{1}}+{\frac {a \left( b{a}^{2}+{c}^{3}+c{a}^{2} \right) }{{c}^{3}}}e_{{2}}$, \\
$[e_2,e_4]= -{\frac {a \left( b+c \right) }{c}}e_{{1}}+{\frac {  2 b{c}^{2}-3 b{a}^{2}+2 {c}^{3}-3 c{a}^{2} }{{c}^{2}}}e_{{2}}$, \\
$[e_3,e_4]= de_{{1}}+fe_{{2}}+ce_{{3}}+ae_{{4}}$, \\
$c \ne 0$, $dc-fa \ne 0$, $b{c}^{2}-b{a}^{2}-c{a}^{2}\ne0$,\\ 
$b+c\ne0$, \\
$2  \left( c-a \right) ^{3} \left( c+a \right) ^{3}{b}^{2}+4 c \left( c-a \right) ^{3} \left( c+a \right) ^{3}b-{c}^{2} \left( 4 {c}^{4}{a}^{2}-6 {c}^{2}{a}^{4}+2 {a}^{6}-{c}^{4}\varepsilon \right) =0$. }
& \\

\cline{2-2}
& 
\parbox[c]{210pt}{\centering 
$[e_1,e_3]= \delta \sqrt {4 {a}^{2}+2 \varepsilon}e_{{1}}$, \\
$[e_2,e_3]= \left( -a+\delta \sqrt {4 {a}^{2}+2 \varepsilon} \right) e_{{2}}$, \\
$[e_1,e_4]= \left( a+\frac12 \delta \sqrt {4 {a}^{2}+2 \varepsilon} \right) e_{{2}}$, \\
$[e_3,e_4]= be_{{1}}+ce_{{2}}+ae_{{4}}$, \\
$2 {a}^{2}+\varepsilon > 0$, $c \ne 0$. }
& \\

\cline{2-2}
& 
\parbox[c]{210pt}{\centering 
$[e_1,e_3]= \frac32  {\frac { \left( 3 {a}^{2}-2 {b}^{2} \right) b}{{a}^{2}-2 {b}^{2}}}e_{{1}}+\frac32  {\frac {a{b}^{2}}{{a}^{2}-2 {b}^{2}}}e_{{2}}$, \\
$[e_2,e_3]= -\frac12  {\frac {a \left( 5 {a}^{2}-4 {b}^{2} \right) }{{a}^{2}-2 {b}^{2}}}e_{{1}}+\frac12  be_{{2}}$, \\
$[e_1,e_4]= -\frac12  {\frac {a \left( 4 {a}^{2}+{b}^{2} \right) }{{a}^{2}-2 {b}^{2}}}e_{{1}}+\frac12  {\frac { \left( 2 {a}^{2}-7 {b}^{2} \right) b}{{a}^{2}-2 {b}^{2}}}e_{{2}}$, \\
$[e_2,e_4]= \frac32  {\frac {b{a}^{2}}{{a}^{2}-2 {b}^{2}}}e_{{1}}-\frac32  {\frac { \left( 2 {a}^{2}-3 {b}^{2} \right) a}{{a}^{2}-2 {b}^{2}}}e_{{2}}$, \\
$[e_3,e_4]= ce_{{1}}+de_{{2}}+ae_{{3}}+be_{{4}}$, \\
${a}^{2}-2 {b}^{2} \ne 0$, $5 {a}^{2}-7 {b}^{2} \ne 0$, $ca-db\ne0$, \\
$ \left( {a}^{2}-{b}^{2} \right)  \left( 5 {a}^{2}-7 {b}^{2} \right)  \left( {a}^{2}+{b}^{2} \right) +2 \varepsilon \left( {a}^{2}-2 {b}^{2} \right) ^{2}=0$. }
& \\

\cline{2-2}
& 
\parbox[c]{210pt}{\centering 
$[e_1,e_3]=  {\frac {  2  \left( a+b \right) ^{2}{c}^{2}+2 {a}^{2}b \left(a+b \right) +{a}^{2}\varepsilon  }{2ac \left( a+b \right) }}e_{{1}}+ae_{{2}}$, \\
$[e_2,e_3]= be_{{1}}+{\frac { \left( 2 a+b \right) c}{a}}e_{{2}}$, \\
$[e_1,e_4]=  \left( 2 b+a \right) e_{{1}}+ce_{{2}}$, \\
$[e_2,e_4]= {\frac {cb}{a}}e_{{1}}+ {\frac {  2  \left( a+b \right)  \left( {a}^{2}+ab+{c}^{2} \right) -a\varepsilon  }{2a \left( a+b \right) }}e_{{2}}$,\\
$[e_3,e_4]= de_{{1}}+fe_{{2}}$, \\
$a \ne 0$, $c \ne 0$, $a+b \ne 0$. }
& \\

\cline{2-2}
& 
\parbox[c]{210pt}{\centering 
$[e_1,e_3]= ae_{{1}}- {\frac {  2 {b}^{2}+ \varepsilon  }{2b}}e_{{2}}$, $[e_2,e_3]= be_{{1}}$, \\
$[e_1,e_4]=  {\frac {  2 {b}^{2}- \varepsilon  }{2b}}e_{{1}}$, $[e_2,e_4]=  {\frac {  2 {b}^{2}- \varepsilon  }{2b}}e_{{2}}$, \\
$[e_3,e_4]= ce_{{1}}+de_{{2}}$, \\
$b \ne 0$, $ad+2 bc \ne 0$. }
& \\

\cline{2-2}
& 
\parbox[c]{210pt}{\centering 
$[e_1,e_3]= ae_{{1}}$, $[e_2,e_3]= be_{{1}}+ce_{{2}}$, \\
$[e_1,e_4]= 2 be_{{1}}$, $[e_2,e_4]= ce_{{1}}-{\frac {  ac-2 {b}^{2}-{c}^{2}  }{b}}e_{{2}}$, \\
$[e_3,e_4]= de_{{1}}+fe_{{2}}$, \\
$b \ne 0$, $abf-acd+2 {b}^{2}d-bcf+{c}^{2}d\ne0$, \\
$2 ac-2 {b}^{2}-2 {c}^{2}- \varepsilon=0$. }
& \\

\cline{2-2}
& 
\parbox[c]{210pt}{\centering 
$[e_1,e_3]= ae_{{1}}+be_{{2}}$, $[e_2,e_3]= ce_{{1}}+{\frac { \left( 2 b+c \right) de_{{2}}}{b}}$, \\
$[e_1,e_4]=  \left( 2 c+b \right) e_{{1}}+de_{{2}}$, $[e_2,e_4]= {\frac {dce_{{1}}}{b}}+fe_{{2}}$, \\
$[e_3,e_4]= he_{{1}}+ze_{{2}}$, \\
$b \ne 0$, ${a}^{2}+{d}^{2}+{c}^{2}+{f}^{2} \ne 0$, \\
$abd-{b}^{3}-2 {b}^{2}c+{b}^{2}f-2 b{d}^{2}-{d}^{2}c=0$, \\
$2 a{b}^{2}d+abdc-{b}^{3}c-2 {b}^{2}{d}^{2}-{b}^{2}cf-2 b{d}^{2}c-{d}^{2}{c}^{2}-{b}^{2}\varepsilon=0$. }
& \\

\hline

\end{longtable}
}

\begin{Remark}
There are $\varepsilon_i=\pm1$, $\varepsilon=\pm1$, $\delta=\pm1$ and  $\delta_i=\pm1$ in the~Table~\ref{table:2}.
\end{Remark}


\begin{proof}
Let $\rho$ has Segre type $\{111\overline{1}\}$. Then there exists a basis $\{e_1,e_2,e_3,e_4\}$ in the metric Lie algebra such that the metric tensor $g$ and the Ricci tensor~$r$ have the following form (see~\cite{ONeil})
\begin{equation} \label{eq:rg_in_1111bar}
r = \begin {pmatrix}
\varepsilon_1\rho_1 & 0                   & 0                   & 0 \\
0                   & \varepsilon_2\rho_1 & 0                   & 0 \\
0                   & 0                   & \varepsilon_3\alpha & \varepsilon_3\beta   \\
0                   & 0                   & \varepsilon_3\beta  & -\varepsilon_3\alpha
\end {pmatrix},\quad 
g = \begin {pmatrix}
\varepsilon_1 & 0             & 0             & 0 \\
0             & \varepsilon_2 & 0             & 0 \\
0             & 0             & \varepsilon_3 & 0 \\
0             & 0             & 0             & \varepsilon_3
\end {pmatrix},
\end{equation}
where $\rho_1\ne\rho_2$, $\beta\ne0$ and $\varepsilon_i=\pm1$.

Next, we consider the equations~(\ref{eq:r_ijk=r_ikj}) as restrictions on the structure constants of the Lie algebra:
\begin{gather*}
\begin{aligned} \left( \rho_1-\alpha \right) C_{14}^1-\beta C_{13}^1 &=0, & \left( \rho_1-\rho_2 \right) C_{12}^1 &=0,\\
\left( \rho_1-\alpha \right) C_{13}^1+\beta C_{14}^1 &=0, & \left( \rho_1-\rho_2 \right) C_{12}^2 &=0,\\
\left( \rho_2-\alpha \right) C_{24}^2-\beta C_{23}^2 &=0, & \beta C_{34}^3 &=0,\\
\left( \rho_2-\alpha \right) C_{23}^2+\beta C_{24}^2 &=0, & \beta C_{34}^4 &=0, \end{aligned}\\
\left( \beta \left( C_{13}^4+3 C_{14}^3 \right) +2 \left( \rho_1-\alpha \right) C_{14}^4 \right) \varepsilon_{{3}}+ \beta C_{34}^1\varepsilon_{{1}}=0,\\
\left( \beta \left( 3 C_{13}^4+C_{14}^3 \right) -2 \left( \rho_1-\alpha \right) C_{13}^3 \right) \varepsilon_{{3}}+ \beta C_{34}^1\varepsilon_{{1}}=0,\\
\left( \beta \left( C_{23}^4+3 C_{24}^3 \right) +2 \left( \rho_2-\alpha \right) C_{24}^4 \right) \varepsilon_{{3}}+ \beta C_{34}^2\varepsilon_{{2}}=0,\\
\left( \beta \left( 3 C_{23}^4+C_{24}^3 \right) -2 \left( \rho_2-\alpha \right) C_{23}^3 \right) \varepsilon_{{3}}+ \beta C_{34}^2\varepsilon_{{2}}=0,\\
\left( \left( \rho_1+\rho_2-2 \alpha \right) C_{12}^3-2 \beta C_{12}^4 \right) \varepsilon_{{3}}+ \left( \rho_1-\rho_2 \right)  \left(C_{13}^2\varepsilon_{{2}}+C_{23}^1\varepsilon_{{1}} \right) =0,\\
\left(  \left( \rho_1+\rho_2-2 \alpha \right) C_{12}^4+2 \beta C_{12}^3 \right) \varepsilon_{{3}}- \left( \rho_1-\rho_2 \right)  \left(C_{14}^2\varepsilon_{{2}}+C_{24}^1\varepsilon_{{1}} \right) =0,\\
\left( \left( \alpha - \rho_1 \right) \left( C_{13}^4-C_{14}^3 \right) -2 \beta C_{13}^3 \right)\varepsilon_{{3}}- \left( \alpha-\rho_1 \right) C_{34}^1 \varepsilon_{{1}}=0,\\
\left( \left( \alpha - \rho_1 \right) \left( C_{13}^4-C_{14}^3 \right) -2 \beta C_{14}^4 \right)\varepsilon_{{3}}+ \left( \alpha-\rho_1 \right) C_{34}^1 \varepsilon_{{1}}=0,\\
\left( \left( \alpha - \rho_2 \right) \left( C_{24}^3-C_{23}^4 \right) +2 \beta C_{23}^3 \right) \varepsilon_{{3}}+ \left( \alpha-\rho_2 \right) C_{34}^2 \varepsilon_{{2}}=0,\\
\left( \left( \alpha - \rho_2 \right) \left( C_{24}^3-C_{23}^4 \right) +2 \beta C_{24}^4 \right) \varepsilon_{{3}}- \left( \alpha-\rho_2 \right) C_{34}^2 \varepsilon_{{2}}=0,\\
\left( \rho_1-\alpha \right) C_{34}^1 \varepsilon_{{1}}-\beta \left( C_{13}^3-C_{14}^4 \right) \varepsilon_{{3}} =0,\\
\left( \rho_2-\alpha \right) C_{34}^2 \varepsilon_{{2}}-\beta \left( C_{23}^3-C_{24}^4 \right) \varepsilon_{{3}} =0,\\
\begin{split}\left( \left( \rho_2-2 \rho_1+\alpha \right) C_{23}^1-\beta C_{24}^1 \right) \varepsilon_{{1}}&+ \left( \alpha C_{13}^2-\rho_2 C_{13}^2-\beta C_{14}^2 \right) \varepsilon_{{2}}+\\
&+ \left( \alpha C_{12}^3-\rho_2 C_{12}^3+\beta C_{12}^4 \right) \varepsilon_{{3}} =0,\end{split}\\
\begin{split}\left( \left( \rho_2-2 \rho_1+\alpha \right) C_{24}^1+\beta C_{23}^1 \right) \varepsilon_{{1}}&+ \left( \alpha C_{14}^2-\rho_2 C_{14}^2+\beta C_{13}^2 \right) \varepsilon_{{2}}-\\
&- \left( \alpha C_{12}^4-\rho_2 C_{12}^4-\beta C_{12}^3 \right) \varepsilon_{{3}} =0,\end{split}\\
\begin{split}\left( \left( \rho_1-2 \rho_2+\alpha \right) C_{13}^2-\beta C_{14}^2 \right) \varepsilon_{{2}}&+ \left( \alpha C_{23}^1-\rho_1 C_{23}^1-\beta C_{24}^1 \right) \varepsilon_{{1}}-\\
&- \left( \alpha C_{12}^3-\rho_1 C_{12}^3+\beta C_{12}^4 \right) \varepsilon_{{3}} =0,\end{split}\\
\begin{split}\left( \left( \rho_1-2 \rho_2+\alpha \right) C_{14}^2+\beta C_{13}^2 \right) \varepsilon_{{2}}&+ \left( \alpha C_{24}^1-\rho_1 C_{24}^1+\beta C_{23}^1 \right) \varepsilon_{{1}}+\\
&+ \left( \alpha C_{12}^4-\rho_1 C_{12}^4-\beta C_{12}^3 \right) \varepsilon_{{3}} =0.\end{split}
\end{gather*}

Further, we use the Jacobi identity, and we calculate the matrix of the Ricci tensor and equate the resulting matrix components to the components of the matrix~(\ref{eq:rg_in_1111bar}). Solving the resulting system with the help of Gr\"{o}bner bases~\cite{12}, within the constraints $\rho_1\ne\rho_2$, $\beta\ne0$, and discarding conformally flat and Ricci parallel cases, we find that
\begin{gather*}
C_{1, 2}^1 = C_{1, 2}^2 = C_{1, 2}^3 = C_{1, 2}^4 = 0,\\ 
C_{1, 3}^1 = C_{1, 3}^2 = C_{1, 3}^3 = C_{1, 3}^4 = 0,\\
C_{1, 4}^1 = C_{1, 4}^2 = C_{1, 4}^3 = C_{1, 4}^4 = 0,\\
C_{2, 3}^1 = C_{2, 3}^2 = 0,\quad C_{2, 4}^1 = C_{2, 4}^2 = 0,\\
C_{3, 4}^1 = C_{3, 4}^3 = C_{3, 4}^4 = 0,\\
C_{2, 3}^3 = -C_{2, 4}^4 = -a\delta\sqrt{3}, \quad C_{2, 3}^4 = C_{2, 4}^3 = a, \quad C_{3, 4}^2 = 2a\varepsilon_2\varepsilon_3, \\
\rho_1 = 0, \quad \rho_2 = -8a^2\varepsilon_2, \quad \alpha = 4a^2\varepsilon_2, \quad \beta = 4a^2\delta\varepsilon_2\sqrt{3},
\end{gather*}
where $\delta=\pm1$, $a\ne0$.

Cases of others Segre types are considered by analogy.
\end{proof}

\begin{Remark}
Our next step is to determine which metric Lie algebra in the Table~\ref{table:2} are isomorphic to each other.
\end{Remark}


\begin{thebibliography}{99}

\bibitem{Besse} 
{\sc A. Besse}, Einstein manifolds, {\it Ergeb. Math.} {\bf 10} (1987), Springer-Verlag, Berlin-Heidelberg.

\bibitem{2}
{\sc G. Calvaruso, A. Zaeim}, Conformally flat homogeneous pseudo-riemannian four-manifolds // {\it Tohoku Math. J.}, {\bf 66} (2014), 31--54.

\bibitem{3}
{\sc G. Calvaruso, A. Zaeim}, Four-dimensional Lorentzian Lie groups // {\it Differential Geometry and its Applications}, {\bf 31} (2013), 496--509.

\bibitem{4}
{\sc G. Calvaruso, A. Zaeim}, Neutral Metrics on Four-Dimensional Lie Groups // {\it Journal of Lie Theory}, {\bf 25} (2015), 1023--1044.

\bibitem{5}
{\sc O.P. Gladunova, E.D. Rodionov, V.V. Slavskii}, Harmonic Tensors on Three-Dimensional Lie Groups with Left-Invariant Lorentz Metric // {\it Journal of mathematical sciences}, {\bf 198:5} (2014), 505--545.

\bibitem{6}
{\sc O.P. Gladunova, V.V. Slavskii}, Left-invariant Riemannian metrics on four-dimensional unimodular Lie groups with zero-divergence Weyl tensor // {\it Doklady Mathematics}, {\bf 81:2} (2010), 298--300.

\bibitem{7}
{\sc D.S. Voronov, E.D. Rodionov}, Left-invariant Riemannian metrics on four-dimensional nonunimodular Lie groups with zero-divergence Weyl tensor // {\it Doklady Mathematics}, {\bf 81:3} (2010), 392--394.

\bibitem{9}
{\sc O.P. Gladunova, V.V. Slavskii}, Harmonicity of the Weyl tensor of left-invariant Riemannian metrics on four-dimensional unimodular Lie groups // {\it Siberian Advances in math.}, {\bf 23:1} (2013), 32--46.

\bibitem{ZHb}
{\sc A. Zaeim, A. Haji-Badali}, Einstein-like Pseudo-Riemannian Homogeneous Manifolds of Dimension Four // {\it Mediterranean Journal of Mathematics}, {\bf 13:5} (2016), 3455--3468.

\bibitem{13}
{\sc P.R. Law}, Algebraic classification of the Ricci curvature tensor and spinor for neutral signature in four dimensions // (2010), arXiv:1008.

\bibitem{ONeil}
{\sc B. O'Neill}, Semi-Riemannian Geometry With Applications to Relativity // {\it Academic Press}, (1983).

\bibitem{12}
{\sc B. Buchberger}, Gr\"{o}bner-{B}ases: {A}n {A}lgorithmic {M}ethod in {P}olynomial {I}deal {T}heory, Reidel Publishing Company, Dodrecht --- Boston ---   Lancaster, {\bf 6} (1985), 184--232.

\end{thebibliography}
\end{document}